\newtheorem{theorem}{Theorem}[section]
\newtheorem{lemma}[theorem]{Lemma}
\newtheorem{e-proposition}[theorem]{Proposition}
\newtheorem{question}{Open question}
\newtheorem{defi}[theorem]{Definition\rm}
\theoremstyle{remark}
\newtheorem{remark}{Remark}
\newcommand{\ve}{\varepsilon}
\newcommand{\deb}{\rightharpoonup}
\newcommand{\pical}{\mathcal{P}}
\newcommand{\lcal}{\mathcal{L}}
\newcommand{\R}{\mathbb{R}}
\newcommand{\haus}{\mathcal{H}}
\newcommand\huno{{\mathcal H}^1}
\newcommand{\Ov}{\overline{\Omega}}
\newcommand\N{{\mathbb N}}
\newcommand{\MO}{\mathcal{M}(\Omega)}
\def\og{\leavevmode\raise.3ex\hbox{$\scriptscriptstyle\langle\!\langle$~}}
\def\fg{\leavevmode\raise.3ex\hbox{~$\!\scriptscriptstyle\,\rangle\!\rangle$}}
\title{A Modica-Mortola approximation for branched transport}
\author{Filippo Santambrogio \thanks{CEREMADE, Université Paris-Dauphine, 
Place de Lattre de Tassigny, F-75775 Paris Cedex 16, {\tt filippo@ceremade.dauphine.fr}}}
\begin{document}
\maketitle

\begin{abstract}
The $M^\alpha$ energy which is usually minimized in branched transport problems among singular 1-dimensional rectifiable vector measures with prescribed divergence is approximated (and convergence is proved) by means of a sequence of elliptic energies, defined on more regular vector fields. The procedure recalls the Modica-Mortola one for approximating the perimeter, and the double-well potential is replaced by a concave power.
\end{abstract}

\section{Introduction}

The name ``branched transport'' is now often used for addressing all the transport problems where the cost for a mass $m$ moving on a distance $l$ is proportional to $l$ but not to $m$ but sub-additive and typically proportional to a power $m^\alpha$ ($0<\alpha<1$). The distributions of sources and destinations are given and one looks for the path followed by each particle, sums up the mass which moves together on each part of the path, and associates to every configuration its total cost $\sum_i l_im_i^\alpha$. The adjective ``branched'' in the name stands for one of the main features of the optimal solutions: they gather mass together, masses tend to move jointly as long as possible, and then they branch towards different destinations, thus giving rise to a tree-shaped structure.

In the case of finite graphs this kind of problems dates back to the `60 in the community of operational research (the first paper on the subject is \cite{Gil}). More recently, several different approaches and results concerning the generalization to continuous frameworks have been introduced by the community of optimal transport. The first paper in this direction is the one by Q. Xia (\cite{xia1}), which will be our main reference. Other approaches have been proposed by Bernot, Caselles, Maddalena, Morel and Solimini (see \cite{MadMorSol, BerCasMor}). The equivalences between the different models as well as a survey on the whole theory are presented in a recent book by Bernot, Caselles and Morel,  \cite{Book Irrigation}, who are responsible for most of the results therein.

A satisfactory numerical treatment of these issues is far from being obtained and this is one of the reasons for the need for approximation results. This paper presents a precise approximation result, in terms of $\Gamma-$convergence (see \cite{introgammaconve}), which can be the origin of possible numerical methods based on PDEs, after an idea by Jean-Michel Morel. Actually, the continuous formulation of branched transport problems passes through a divergence-constrained formulation (see next session, where the model by Xia is quickly sketched) and it is natural to approximate it by means of problems which concern more regular vector fields (i.e. which are not measures concentrated on one-dimensional graphs, but have a density that is at least weakly differentiable).

We propose an approximation based on functionals with a concave term and a Dirichlet term, with coefficients which increase the weight of the first and vanish on the latter as far the approximation parameter $\ve$ goes to $0$. In this way one would need to solve a problem of the kind
\begin{equation}\label{Fve}
\min \int F(|u|) + \ve \int |\nabla u |^2
\end{equation}
under constraints or penalization on $\nabla\cdot u$. The optimality conditions for this problem read as an elliptic vector PDE (a system) with an unknown pressure coming from the divergence constraint. In dimension two, the decomposition of any vector field into a gradient plus an orthogonal gradient allows to translate this into a system of scalar fourth-order elliptic equations. Local minima may be found by a gradient descent method passing through a parabolic evolution equations, while global ones are more difficult to detect due to the concavity of $F$. Numerical methods for this kind of equations are much more developed and efficient rather than the combinatory-based ones which characterize the study of finite networks. A partial use of genetics algorithm could thereafter help in avoiding local minima.

Besides possibles numerical applications, the interest of this $\Gamma-$convergence result also comes from its comparison with the elliptic approximation of the perimeter functional, proposed by Modica and Mortola (\cite{ModMor}) at the beginning of $\Gamma-$convergence time. In their case $u$ was scalar and $F$ was a double-well potential, enforcing at the limit $u$ (after a suitable rescaling of the functional, so that the $F$-part has a coefficient going to infinity and the Dirichlet part a negligible one) to take values in $\{0,1\}$. And the energy was, in the limit as $\ve\to 0$, concentrated on a lower-dimensional structure, i.e. the interface between the two phases $u=0$ and $u=1$. The same will happen here: the concave power $F$ will play the role of a double well at $u=0$ and $|u|=\infty$  and the energy will concentrate on a one-dimensional graph.

Being $u$ a vector, in our problem, one could also evoke Ginzburg-Landau theory with its approximation (see \cite{BetBreHel 93, BetBreHel book} where the problem of the convergence for the minimizers of
$$
\min \int (1-|u|)^2 + \ve \int |\nabla u |^2
$$
is first addressed). Yet, it is easy to notice, due to the divergence which is a bounded measure, that the problem is essentially scalar, as at the limit there will be locally one direction only which will be relevant. Actually a singular vector measure, concentrated on a lower-dimensional object, must be oriented along a tangent direction if we want its divergence to be a measure (otherwise it is a first-order distribution).

Moreover, in dimension two it is possible to take advantage of the usual decomposition of a vector field into a gradient plus a rotated gradient so that information on the divergence ``fix'' the gradient part and all the functional may be expressed through the other gradient. In this way one would arrive to consider the limits of something like
$$
\min \int F(\nabla u)+ \ve \int |D^2 u |^2
$$
which are functionals of the form of those studied by Aviles and Giga (Modica-Mortola results for higher order energies, see \cite{AviGig87,AviGig89} where the second order term only contains the Laplacian, and lately \cite{AmbDeLMan} with the whole Hessian).

Probably the main goal of the paper is creating a a bridge between two different topics in Calculus of Variations: the approximation of free discontinuity problems on the one hand and the optimization of transport networks. The first one, much linked with elliptic PDEs has already been object of high-quality researches for decades and is studied in relation with its applications in material sciences and image segmentation. The second is, in its continuous version, more recent and linked to the theory of optimal transport by Monge and Kantorovitch, with applications ranging from economics to biology and geophysics (see \cite{RodRin}). Elliptic PDEs, dimensional reduction and geometric measure theory are very much involved as well. See \cite{AmbTor 90, AmbTor 92, Braides, ModMor} for the whole theory and the main examples of $\Gamma-$convergence applied to free discontinuity problems: notice that here, the discontinuities (or ``jumps'') are replaced by a bilateral singularity, in the sense that the rectifiable graphs in the limit problem vanish almost everywhere and are concentrated on one-dimensional sets, thus having a double jump, whose ``intensity'' (measured with respect to $\huno$ instead of $\lcal^d$) enters the limit functional.

The paper will start with two short sections on preliminaries, Section 2 on Xia's formulation of branched transport problems, Section 3 on $\Gamma-$convergence. Then, we will discuss briefly in Section 4 why to choose the form of the approximation we will choose, mainly concentrating on the choice of the exponents, since they are not obvious (in particular, the function $F$ in \eqref{Fve} will be of the form $F(|u|)=|u|^\beta$ with $\beta\neq \alpha$). Section 5 will present the detailed proof of the main result ($\Gamma-$convergence of the energies, in dimension two only, under no divergence conditions). Section 6 will suggest how to apply the result for producing interesting approximated variational problems, underlining what can be done with the tools we have so far and what could be worthwhile to prove in possible future investigations.

\section{Branched transport via divergence-constrained optimization}

We present here the framework of the optimization problem proposed by Xia in \cite{xia1, xia2} and then studied by many authors (see for instance \cite{Book Irrigation} for a whole presentation of the theory).

Let $\Omega\subset\R^d$ be an open set with compact closure $\Ov$ and $\mathcal{M}(\Omega)$ the set of finite vector measures on $\Ov$ with values in $\R^d$ and such that their divergence is a finite scalar measure, i.e. such that
\begin{equation}\label{norma div}
\sup \left\{\int \nabla\phi\cdot du\;:\; \phi\in C^1(\Ov), \;||\phi||_{L^\infty}\leq 1\right\}<+\infty
\end{equation}
(as you can see, we do not ask for $\phi$ vanishing at the boundary, i.e. we take into account possible parts of $\nabla\cdot u$ on $\partial \Omega$ as well). The value of the supremum in \eqref{norma div} will be denoted by $|\nabla\cdot u|(\Ov)$, i.e. the mass of the measure ``total variation of the divergence'' of $u$.
On this space we consider the convergence $u_\ve\to u \Leftrightarrow u_\ve\deb u \mbox{ and }\nabla\cdot u_\ve\deb \nabla\cdot u$ as measures. When a function is considered as an element of this space, or a functional space as a subset of it, we always think of absolutely continuous measures (with respect to the Lebesgue measure on $\Omega$) and the functions represent their densities.

When we take $u\in\MO$ and we write $u=U(M,\theta,\xi)$ we mean that $u$ is a rectifiable vector measure (it is the translation in the language of vector measures of the concept of rectifiable currents) $u=\theta\xi\cdot\huno_{|M}$ whose density with respect to the $\huno-$Hausdorff measure on $M$ is given by the real multiplicity $\theta: M\to\R^+$ times the orientation $\xi:M\to\R^d$, $\xi$ being a measurable vector field of unit vectors belonging to the (approximate) tangent space to $M$ at $\huno-$almost any point. 

For $0<\alpha<1$, we consider the energy

\begin{equation}\label{defi energy}
M^\alpha(u)=\begin{cases} \int_M \theta^\alpha d\huno &\mbox{ if } u = U(M,\theta,\xi),\\
                                                                +\infty &\mbox{ otherwise. } \end{cases}
                                                                \end{equation}
                                                                
The problem of branched transport amounts to minimizing $M^\alpha$ under a divergence constraint:
\begin{equation}\label{prob xia}
\min \left\{M^\alpha(u)\;:\;\nabla\cdot u = f := f^+-f^-\right\}.
\end{equation}
The divergence constraint is given in weak form and means 
$$\int \nabla\phi\cdot\,du= \int \phi\; d(f^-\!-\!f^+)\;\mbox{ for all }\phi\in C^0(\Ov),$$ 
which actually corresponds to Neumann boundary conditions
$$\nabla\cdot u = f \;\mbox{ in $\Omega$   and }\; u\cdot n = 0 \;\mbox{ on }\partial\Omega.$$
From now on, we will always think of Neumann boundary conditions when speaking about divergences, so that $\nabla\cdot u$ is the linear functional associating to every $\phi\in C^1(\Omega)$ (independently of the values on $\partial\Omega$) the value $\int \nabla\phi\cdot du$: if $u$ is a regular function this corresponds to a measure which absolutely continuous inside $\Omega$ with density given by the true divergence, and which has a $\huno-$part on the boundary with density given by $u\cdot n$.

\begin{remark}\label{xia relaxation}
Notice that this is not the original definition by Xia of the Energy $M^\alpha$: Xia proposed it in \cite{xia1} as a relaxation from the case of finite graphs, but formula \eqref{defi energy} can be seen as a representation formula for the relaxed energy
$$M^\alpha(u)=\inf\left\{\liminf_n E^\alpha(G_n)\;:\; G_n \mbox{ finite graph, }\;u_{G_n}\to u\right\},$$
where 
\begin{equation}\label{discrete problem}
E^\alpha(G):=\,\sum_h w_h^{\alpha}\haus^1(e_h),
\end{equation}
for a weighted oriented graph $G=(e_h,\, \hat{e}_h,\, w_h)_h$ (where $e_h$ are the edges, $\hat{e}_h$ their orientations, $w_h$ the weights), and $u_G$ is the associated vector measure given by 
$$u_G:=\sum_h w_h\hat{e}_h \huno_{|e_h},$$
(and the convergence is in the sense of $\MO$). For the proof of the equivalences between the two definition, look at \cite{xia2} or at Chapter 9 in \cite{Book Irrigation}.
\end{remark}

Notice that in general Problem \eqref{prob xia} admits a solution with finite energy for any pair of probability measures $(f^+,f^-)$ (or, more generally, for any pair of equal mass finite positive measures), provided $\alpha>1-1/d$ (this is proven in \cite{xia1} by means of an explicit construction).

\section{Variational approximation, preliminaries}\label{gamma conv sec}

The main result of the paper will be a $\Gamma-$convergence result for a sequence of energies approximating $M^\alpha$. We will see in Section 6 that for a complete approximation of the problem, one would need to insert the ``boundary conditions'' given by the divergence constraints and to prove compactness for a suitable sequence of minimizers $u_\ve$ of the approximating problems. 

For precising what we mean by ``approximating the energy'' and how to use the result, let us sketch briefly the main outlines of $\Gamma-$convergence's theory, as introduced by De Giorgi (see \cite{DeGFra} and \cite{introgammaconve}).

\begin{defi}
On a metric space $X$ let $F_n:X\to\R\cup\{+\infty\}$ be a sequence of functions. We define the the two lower-semicontinuous functions $F^-$ and $F^+$ (called $\Gamma-\liminf$and $\Gamma-\limsup$ $F^+$ of this sequence, respectively) by
\begin{gather*}
F^-(x):=\inf\{\liminf_{n\to\infty} F_n(x_n)\;:\;x_n\to x\},\\
 F^+(x):=\inf\{\limsup_{n\to\infty} F_n(x_n)\;:\;x_n\to x\}.
 \end{gather*}

Should $F^-$ and $F^+$ coincide, then we say that $F_n$ actually $\Gamma-$converges to the common value $F=F^-=F^+$. 
\end{defi}

This means that, when one wants to prove $\Gamma-$convergence of $F_n$ towards a given functional $F$, one has actually to prove two distinct facts: first we need $F^-\geq F$ (i.e. we need to prove $\liminf_n F_n(x_n)\geq F(x)$ for any approximating sequence $x_n\to x$; not only, it is sufficient to prove it when $F_n(x_n)$ is bounded) and then $F^+\leq F$ (i.e. we need to find a sequence $x_n\to x$ such that $\limsup_n F_n(x_n)\leq F(x)$). 

The definition of $\Gamma-$convergence for a continuous parameter $\ve\to 0$ obviously passes through the convergence to the same limit for any subsequence $\ve_n\to 0$.

Among the properties of $\Gamma-$convergence we have the following:
\begin{itemize}
\item if there exists a compact set $K\subset X$ such that $\inf_X F_n=\inf_K F_n$ for any $n$, then $F$ attains its minimum and $\inf F_n\to \min F$;
\item if $(x_n)_n$ is a sequence of minimizers for  $F_n$ admitting a subsequence converging to $x$, then $x$ minimizes $F$
\item if $F_n$ is a sequence $\Gamma-$converging to $F$, then $F_n+G$ will $\Gamma-$converge to $F+G$ for any continuous function $G:X\to\R\cup\{+\infty\}$.
\end{itemize}

\section{Elliptic approximation, intuition and heuristics}

As we partially mentioned, the result we will present in Section 5 is somehow inspired by, or at least recalls most of the results in the elliptic approximation of free discontinuity problems (Modica-Mortola, Ginzburg-Landau or Aviles-Giga). We will only mention the following (see \cite{ModMor} and \cite{Braides}) because of its simplicity, even if it is probably not the closest one in this two-dimensional setting where Aviles-Giga seems closer.

\begin{theorem}
Define the functional $F_\ve$ on $L^1(\Omega)$ through
$$F_\ve(u)=\begin{cases}\frac 1\ve \int W(u(x))dx + \ve \int |\nabla u(x)|^2dx&\mbox{ if }u\in H^1(\Omega);\\
+\infty&\mbox{ otherwise}.\end{cases}$$
Then, if $W(0)=W(1)=0$ and $W(t)>0$ for any $t\neq 0,1$, the functionals $F_\ve$ $\Gamma-$converge towards the functional $F$ given by
$$F(u)=\begin{cases}cPer(S)&\mbox{ if }u=1 \mbox{ on }S,\; u=0 \mbox{ on $S^c$ and $S$ is a finite-perimeter set};\\
+\infty&\mbox{ otherwise},\end{cases}$$
where the constant $c$ is given by $c=2\int_0^1\sqrt{W(t)}dt$.
\end{theorem}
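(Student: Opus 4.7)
The proof splits into the two standard halves of a $\Gamma$-convergence argument: the liminf inequality $F^{-}\geq F$, together with the rigidity statement that finite-energy limits are necessarily characteristic functions of finite-perimeter sets, and the construction of a recovery sequence giving $F^{+}\leq F$. The key device in both directions is the Young/AM--GM inequality, which reduces the problem to a one-dimensional total-variation computation through the change of variable $\phi(t):=\int_{0}^{t}\sqrt{W(s)}\,ds$.

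\textbf{Liminf and rigidity.} Pointwise on $\Omega$ one has
\[
\tfrac{1}{\ve}W(u_\ve)+\ve|\nabla u_\ve|^{2}\;\geq\;2\sqrt{W(u_\ve)}\,|\nabla u_\ve|\;=\;2|\nabla(\phi\circ u_\ve)|,
\]
so $F_\ve(u_\ve)\geq 2\int_{\Omega}|\nabla(\phi\circ u_\ve)|$. Now suppose $u_\ve\to u$ in $L^{1}$ with $\sup_\ve F_\ve(u_\ve)<+\infty$. Since $\int W(u_\ve)\leq \ve F_\ve(u_\ve)\to 0$, and up to a subsequence $u_\ve\to u$ a.e., Fatou gives $W(u)=0$ a.e., hence $u\in\{0,1\}$ a.e., i.e.\ $u=\1_{S}$. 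Since $\phi$ is uniformly continuous on bounded intervals (one first truncates $u_\ve$ between $0$ and $1$, which does not increase $F_\ve$), $\phi\circ u_\ve\to \phi(1)\,\1_{S}$ in $L^{1}$. Lower semicontinuity of total variation then yields
\[
\liminf_\ve F_\ve(u_\ve)\;\geq\;2\,|D(\phi\circ u)|(\Omega)\;=\;2\phi(1)\,Per(S)\;=\;c\,Per(S),
\]
which gives the liminf inequality and simultaneously the finite-perimeter rigidity.

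\textbf{Limsup.} By the density of smooth sets in the finite-perimeter topology and the lower semicontinuity of $F^{+}$, it suffices to build a recovery sequence when $S$ has $C^{2}$ boundary, so that the signed distance $d_{S}$ to $\partial S$ is smooth in some tubular neighbourhood. The optimal one-dimensional transition profile is the solution $\gamma:\R\to[0,1]$ of the ODE $\gamma'=\sqrt{W(\gamma)}$ with $\gamma(-\infty)=0$, $\gamma(+\infty)=1$, which is precisely the equality case in the AM--GM bound above. Define $u_\ve(x):=\gamma(d_{S}(x)/\ve)$, cut off to be exactly $0$ and $1$ outside a fixed tubular neighbourhood. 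By the coarea formula applied with $d_{S}$ one obtains, up to negligible boundary corrections,
\[
F_\ve(u_\ve)=\int_{-\delta/\ve}^{\delta/\ve}\!\Bigl(W(\gamma(t))+(\gamma'(t))^{2}\Bigr)\,\huno(\{d_{S}=\ve t\})\,dt+o(1),
\]
and since $\huno(\{d_{S}=\ve t\})\to Per(S)$ pointwise as $\ve\to 0$, dominated convergence yields $F_\ve(u_\ve)\to 2\phi(1)\,Per(S)=c\,Per(S)$.

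\textbf{Main obstacle.} The most delicate step is the rigidity part of the liminf: neither the $L^{1}$ convergence of $u_\ve$ nor the bound $\int W(u_\ve)\to 0$ are by themselves sufficient to identify the limit as $\{0,1\}$-valued without some care about the growth of $W$ (the truncation $0\leq u_\ve\leq 1$, or a coercivity assumption on $W$ at infinity, is what makes the argument go through). Once the limit is identified as a $BV$ function valued in $\{0,1\}$, the rest is a pure calibration computation, in which the AM--GM bound is saturated exactly by the profile $\gamma$; this is also precisely the reason for the sharp constant $c=2\int_{0}^{1}\sqrt{W(t)}\,dt$.
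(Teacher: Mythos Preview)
The paper does not actually prove this theorem: it is quoted as a classical result (with references to Modica--Mortola and Braides), and the only element of proof the paper records is the pointwise Young inequality
\[
\frac{1}{\ve}W(u)+\ve|\nabla u|^{2}\;\geq\;2\sqrt{W(u)}\,|\nabla u|\;=\;2|\nabla(H\circ u)|,
\]
with $H$ the primitive of $\sqrt{W}$, yielding $F_\ve(u)\geq 2\,TV(H\circ u)$. Your liminf argument is exactly this, with $\phi=H$, and your limsup via the optimal one-dimensional profile $\gamma'=\sqrt{W(\gamma)}$ together with the coarea formula is the standard companion construction that saturates the inequality. So your proof is correct and is precisely the classical route the paper is alluding to; there is nothing to compare beyond noting that your write-up fills in the details the paper deliberately omits.

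One small remark on presentation: in your rigidity step you correctly observe that truncation to $[0,1]$ does not increase $F_\ve$ (because $W\geq 0$ with $W(0)=W(1)=0$ and truncation is $1$-Lipschitz for the gradient term), but you should also say explicitly that the truncated sequence still converges in $L^{1}$ to $u$ once $u$ is known to be $\{0,1\}$-valued, since projection onto $[0,1]$ is $1$-Lipschitz and fixes $u$. This is implicit in what you wrote but worth one line. Likewise, in the limsup the profile $\gamma$ typically reaches $0$ and $1$ only at $\pm\infty$, so your ``cut off to be exactly $0$ and $1$'' carries an $o(1)$ error that you have absorbed into the $o(1)$ term; this is fine but could be stated.
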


We precised the value of the constant so that the reader will notice that similar constants are involved in our case as well. For the same reason (the analogy with the present paper) we precise also that one of the key-ingredient in the proof of the above Theorem is the inequality
$$\frac 1 \ve W(u(x))+ \ve |\nabla u (x)|^2\geq 2\sqrt{W(u(x)}||\nabla u (x)|=2|\nabla (H\circ u)|,$$
where $H$ is the primitive of $\sqrt{W}$ (so that one has $F_\ve(u)\geq 2TV(H\circ u)$).

In our study we will instead consider functionals of the form
\begin{equation}\label{Mve}
E_\ve(u)=\ve^{\gamma_1}\int_\Omega |u(x)|^\beta dx + \ve^{\gamma_2}\int_\Omega |\nabla u(x)|^2 dx,
\end{equation}
defined on $u\in H^1(\Omega;\R^2)$ and set to $+\infty$ outside $H^1\subset\MO$, for two exponents $\gamma_1<0<\gamma_2$. 

As one can see the functional recalls Modica-Mortola's functional to recover the perimeter as a limit, where the double-well potential is replaced with a concave power. Notice that concave powers, in their minimization, if the average value for $u$ is fixed in a region (which is in some sense the meaning of weak convergence, i.e. the convergence we use on $\MO$), prefer either $u=0$ or $|u|$ being as large as possible, i.e. there is sort of a double well on zero and infinity.

We give here a heuristics for determining the exponents $\beta$, $\gamma_1$ and $\gamma_2$. 

Suppose you want to approximate a measure $u$ concentrated on a segment $S$, with multiplicity $m$, and directed towards one of the direction of the segment, via a measure $u_A$whose density is smooth and concentrated on a strip of width $A$ around $S$ (for instance by convolution).

The values of $u_A$ will hence be of the order of $m/A^{d-1}$ and the values of its gradient of the order of $m/A^d$. This gives a functional of the order of
$$E_\ve\approx\ve^{\gamma_1}A^{d-1}\left(\frac{m}{A^{d-1}}\right)^\beta+\ve^{\gamma_2}A^{d-1}\left(\frac{m}{A^{d}}\right)^2.$$
In the minimization one will choose the optimal width $A$, depending on $m$ and $\ve$, and this gives 
$$A\approx \ve^{\frac{\gamma_2-\gamma_1}{2d-\beta(d-1)}}m^{\frac{2-\beta}{2d-\beta(d-1)}};\quad
E_\ve\approx  \ve^{\gamma_2-(\gamma_2-\gamma_1)\frac{d+1}{2d-\beta(d-1)}}m^{2-(2-\beta)\frac{d+1}{2d-\beta(d-1)}}.$$
The correct choice for a possible convergence result towards the energy \eqref{defi energy} which is proportional to $m^\alpha$ is obtained by imposing that the exponent of $m$ is $\alpha$ and the exponent of $\ve$ is zero, i.e.
$$\beta=\frac{2-2d+2\alpha d}{3-d+\alpha(d-1)};\quad \frac{\gamma_1}{\gamma_2}=\frac{(d-1)(\alpha-1)}{3-d+\alpha(d-1)}.$$
Notice that $\gamma_1$ and $\gamma_2$ may not both be determined since one can always replace $\ve$ with a power of $\ve$, thus changing the single exponents but not their ratio.
Notice also that the exponent $\beta$ is positive and less than $1$ as soon as $\alpha\in]1-1/d,1[$, which is the usual condition.

Finally, it is worthwhile to remark that for this choice of exponents the dependence of $A$ with respect to $\ve$ is of the form $A\approx \ve^{\gamma_2/(d+1)}$ (and $\gamma_2>0$). This implies $\lim_{\ve\to 0}A=0,$ which gives weak convergence of the approximation we chose (enlarging $u$ a strip of width $A$ without changing its mass) to $u$. It proves nothing, but it is coherent with the convergence result we want to prove.

We conclude by underlining the case of dimension $2$, since it will be the only one we will solve: in this case one has $\beta=\frac{4\alpha-2}{\alpha+1}$ and $\gamma_1/\gamma_2=(\alpha-1)/(\alpha+1)$.

\begin{remark}
From this heuristics and from the proof that we will present in the following, the reader may see that the construction only works for $\alpha>1-1/D$. This is quite astonishing if one thinks that, also for $\alpha\leq 1-1/d$, there are measures $f^\pm$ which admit possible solution with finite energy (in particular when both the measures are supported on a same lower-dimensional set). Yet, the problem lies in the kind of approximation we require, which uses measures $u_\ve$ which are more regular and in particular are absolutely continuous with respect to $\lcal^d$ and may not be concentrated on lower-dimensional sets.
\end{remark}

\section{Our main $\Gamma-$convergence result}

This Section will be devoted to the proof of the following theorem:

\begin{theorem}\label{main}
Suppose $d=2$ and $\alpha\in]1/2,1[$: then we have $\Gamma-$convergence of the functionals $M^\alpha_\ve$ to $cM^\alpha$, with respect to the convergence of $\MO$, as $\ve\to 0$, where $c$ is a finite and positive constant (the value of $c$ is actually $c=\alpha^{-1}\left(4c_0\alpha/(1-\alpha)\right)^{1-\alpha}$, being $c_0=\int_0^1\sqrt{t^\beta-t}dt$).
\end{theorem}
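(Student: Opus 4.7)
The plan is to establish the two standard halves of the $\Gamma$-convergence separately, guided by the analogy with the Modica--Mortola theorem and the heuristic choice of exponents already recorded in Section 4.

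\textbf{Upper bound (recovery sequence).} Using the density of finite polyhedral graphs for the $M^\alpha$-energy (Remark \ref{xia relaxation}), I would first reduce to the case where $u = m\,e\,\huno_{|S}$ is concentrated on a single segment $S$ with orientation $e$ and multiplicity $m$. Parametrizing a tubular neighborhood of $S$ by tangential/normal coordinates $(s,t)$, one constructs $u_\ve(s,t) = v_\ve(t)\,e$, where $v_\ve$ is the minimizer of the one-dimensional cell problem
$$\min\left\{\ve^{\gamma_1}\int_\R |v|^\beta\,dt + \ve^{\gamma_2}\int_\R |v'|^2\,dt \;:\; \int_\R v = m,\; v(\pm\infty)=0\right\}.$$
The Euler--Lagrange equation admits the first integral $\ve^{\gamma_2}(v')^2 = \ve^{\gamma_1}(v^\beta - \mu v)$, so $v_\ve$ is bell-shaped with peak $v_{\max} = \mu^{1/(\beta-1)}$ and width $A_\ve \sim \ve^{\gamma_2/3}m^{1/3}$. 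After rescaling $v = v_{\max} w$ one is reduced to the elementary integral $c_0 = \int_0^1\sqrt{s^\beta - s}\,ds$, and minimizing the resulting two-term expression in $v_{\max}$ yields exactly $c\,m^\alpha$ per unit length of $S$, with the stated constant; the arithmetic relies on $(2+\beta)/(4-\beta) = \alpha$ and the scaling relation $\gamma_1/\gamma_2 = (\alpha-1)/(\alpha+1)$ to cancel the $\ve$-exponent. Endpoints and junctions of the polyhedral graph cost only $o(1)$ since the transition scale $A_\ve \to 0$.

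\textbf{Lower bound.} The key tool is a pointwise inequality modeled on Modica--Mortola's: for $\mu \ge 0$, scalar $v \ge 0$, and $\lambda := \ve^{\gamma_1}\mu$,
$$\ve^{\gamma_1}v^\beta + \ve^{\gamma_2}|v'|^2 \;\ge\; \lambda\,v + 2\ve^{(\gamma_1+\gamma_2)/2}\sqrt{(v^\beta - \mu v)_+}\,|v'|,$$
obtained by splitting $\ve^{\gamma_1}v^\beta = \lambda v + \ve^{\gamma_1}(v^\beta - \mu v)$ and applying AM--GM to the remainder against the Dirichlet term. The right-hand side is $\lambda v + 2\ve^{(\gamma_1+\gamma_2)/2}|(H_\mu\circ v)'|$ for $H_\mu(s) = \int_0^s\sqrt{(r^\beta - \mu r)_+}\,dr$, hence a total-variation term. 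I would then run a slicing argument: fix a direction $\xi\in S^1$, restrict the scalar component $u_\ve\cdot\xi$ to lines parallel to $\xi$, apply the 1D estimate slice-by-slice, optimize $\mu$ according to the local peak, and integrate via Fubini. Since the limit $u = U(M,\theta,\xi)$ is tangent to $M$ at $\huno$-a.e.\ point, one covers $M$ by small pieces where the tangent direction is almost constant and applies the slicing locally; the same cell-problem computation as in the upper bound then gives the total as $c\int_M\theta^\alpha\,d\huno = c\,M^\alpha(u)$.

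\textbf{Main obstacle.} The delicate point will be the lower bound. A priori the limit $u$ is only a measure in $\MO$, and its rectifiability must be extracted from the energy bound: the $L^\beta$ term forces mass to concentrate while the Dirichlet term prevents oscillation, both at a common transverse scale $A_\ve$, and this combination should produce the desired 1D structure. One must also justify the slicing despite ambiguities at junction points of $M$, and optimize $\mu$ locally according to the limiting density $\theta$ so that the pointwise inequality becomes asymptotically saturated and produces the exact constant $c$. Finally, the hypothesis $\alpha > 1/2 = 1 - 1/d$ enters precisely to ensure $\beta\in(0,1)$ and the finiteness of $c_0 = \int_0^1\sqrt{s^\beta - s}\,ds$.
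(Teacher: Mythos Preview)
Your overall architecture matches the paper's: density reduction to finite graphs for the $\Gamma$-$\limsup$, explicit transverse profile solving $v'=\pm c\sqrt{v^\beta-\mu v}$ on a segment, and a Modica--Mortola type pointwise inequality plus slicing for the $\Gamma$-$\liminf$. The upper bound is essentially as in the paper (the paper also checks carefully that the mass constraint $\int v=m$ is compatible with the optimal peak height, and handles endpoints by a radial half-disk extension, but your sketch is adequate there).

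The genuine gap is in the lower bound, and it is not where you placed it. After slicing you obtain, for each rectangle,
\[
\int_R \phi\,d\mu_\ve \;\ge\; c\int_a^b |m_\ve(x)|^\alpha\,dx - r_\ve,
\]
where $m_\ve(x)$ is the flux of $u_\ve$ through the transversal $\{x\}\times[c,d]$. The functional $m\mapsto\int|m|^\alpha$ is \emph{concave}, hence not weakly lower semicontinuous: you cannot simply pass to the limit using $u_\ve\deb u$. The paper's key observation is that the divergence bound built into the $\MO$ topology yields a uniform $BV$ estimate on $x\mapsto m_\ve(x)$ (integrate $u_\ve$ against $\psi'(x)\phi(y)$ and use $|\nabla\!\cdot u_\ve|(\Ov)\le C$); this upgrades weak convergence of $m_\ve$ to $L^1$ and a.e.\ convergence, after which Fatou applies. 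Your proposal never invokes the divergence bound in the lower bound, and without it the argument fails.

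Second, your plan to ``optimize $\mu$ locally according to the limiting density $\theta$'' presupposes that the limit is rectifiable with a well-defined multiplicity. The paper does the opposite: it optimizes $\mu$ (their $L_\ve=A_\ve^{\beta-1}$) according to the \emph{peak of $u_\ve$ on each slice}, obtains the inequality $\mu(R)\ge c\int|\tilde m(x)|^\alpha dx$ for the limit measure on arbitrary rectangles, and only \emph{then} deduces rectifiability of $u$ from this inequality, via the Federer--White slicing criterion (showing that a.e.\ one-dimensional slice of $u$ has finite $G_\alpha$-energy, hence is purely atomic). Your heuristic that ``the $L^\beta$ term forces mass to concentrate while the Dirichlet term prevents oscillation'' is not a substitute for this argument.
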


As usual in several $\Gamma-$convergence proofs, we will work separately on the two inequalities we need.

\subsection{$\Gamma-\liminf$ inequality}

In this part we will consider an arbitrary sequence $u_\ve\to u$ and we will suppose that $M^\alpha_\ve(u_\ve)$ is bounded. This implies at first that all the $u_\ve$ are $H^1$ functions, and that
\begin{equation}\label{Lp estimates}
\int |u_\ve|^\beta \leq C\ve^{1-\alpha}\quad\mbox{ and }\quad \int |\nabla u_\ve|^2\leq C\ve^{-1-\alpha}.
\end{equation}

{\bf Step 1}
{\it Lower bounds on $M^\alpha_\ve$}

Consider a rectangle $R\subset\Omega$.  Suppose for simplicity that it is oriented according to the $x$ and $y$ axes, i.e. $R=[a,b]\times [c,d]$, and set $v_\ve:=[(u_\ve)_x]_+$ (the positive part of the $x-$component of $u_\ve$) and $v_\ve':=\partial v_\ve/\partial y$. Consider also a function $\phi_1: R\to[0,1]$, depending on $y$ only, such that $\phi_1=1$ on $[a,b]\times [c+\delta, d-\delta]$ and $\phi_1=0$ for $y\in \{c,d\}$. For every $x$, set $R_x:=\{x\}\times  [c, d]$ and $R'_x:= \{x\}\times  [c+\delta, d-\delta]$.

Fix a value of $x\in[a,b]$ and let $A_\ve$ be the maximal value of $v_\ve$ on $R'_x$ (which is well-defined for a.e. $x$) and $L_\ve=A_\ve^{\beta-1}$, $f_\ve(t)=\sqrt{(t^\beta-L_\ve t)_+}$ and $F_\ve(t)=\int_0^t f_\ve(s)ds$. One can write
$$v_\ve^\beta=f_\ve^2(v_\ve)+L_\ve v_\ve-(v_\ve^\beta-L_\ve v_\ve)_-\geq f_\ve^2(v_\ve)+L_\ve v_\ve- L_\ve v_\ve I_{R_x\setminus R'_x}=f_\ve^2(v_\ve)+L_\ve v_\ve I_{R'_x}$$
where the second inequality comes from the fact that $v_\ve\leq A_\ve$ implies $v_\ve^\beta-L_\ve v_\ve\geq 0$ and $A_\ve=\max_{R'_x} v_\ve$.
Considering the other term as well one has
\begin{multline*}
\ve^{\alpha-1}v_\ve^\beta+\ve^{\alpha+1}(v_\ve')^2\geq \ve^{\alpha-1}L_\ve v_\ve I_{R'_x}+\ve^{\alpha-1}f_\ve^2(v_\ve)+\ve^{\alpha+1}(v_\ve')^2\\
\geq \ve^{\alpha-1}L_\ve v_\ve I_{R'_x}+2\ve^\alpha f_\ve(v_\ve)v_\ve'= \ve^{\alpha-1}L_\ve v_\ve I_{R'_x}+2\ve^\alpha (F_\ve(v_\ve))'.
\end{multline*}

By multiplying times $\phi_1$ and integrating on $R_x$ with respect to $y$, one has
\begin{multline}\label{with CS}
\int_{R_x}\left(\ve^{\alpha-1}v_\ve^\beta+\ve^{\alpha+1}(v_\ve')^2\right)\phi_1(y)dy\\
\geq \ve^{\alpha-1}L_\ve \int_{R_x'}v_\ve dy+2\ve^\alpha TV(F_\ve(v_\ve)\phi_1)- 2\ve^\alpha\int_{R_x\setminus R'_x}F_\ve(v_\ve)|\phi_1'|dy.
\end{multline}

Since $ v_\ve\phi_1$ vanishes at both boundaries and reaches the value $F_\ve(A_\ve)$ inside $R'_x$, the total variation $TV(F_\ve(v_\ve)\phi_1)$ is at least $2F_\ve(A_\ve)=2c_0A_\ve^{1+\beta/2}$. This value may be computed by a change of variable ($t=A_\ve s$):
$$F_\ve(A_\ve)=\int_0^{A_\ve}\sqrt{(t^\beta-L_\ve t)_+}dt=\int_0^1\sqrt{A_\ve^\beta s^\beta -L_\ve A_\ve s}\,A_\ve ds=A_\ve^{1+\beta/2}c_0$$
(we used $L_\ve=A_\ve^{\beta-1}$ and $t^\beta\geq L_\ve t$ for $t\leq A_\ve$). 

Notice that the last term in \eqref{with CS} will tend to zero as $\ve\to 0$, after integration with respect to $x$, thanks to Lemma \ref{sobolev} below, since $\phi_1$ is a fixed regular function and hence $\phi_1'$ is bounded, $F_\ve(t)\leq \int_0^t s^{\beta/2}ds=ct^{1+\beta/2}$ and $|v_\ve|\leq |u_\ve|$.

\begin{lemma}\label{sobolev}
For any bounded energy sequence $u_\ve$ we have 
$$\lim_{\ve\to 0}\ve^\alpha \int_R |u_\ve|^{1+\beta/2}=0.$$
\end{lemma}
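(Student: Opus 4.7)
The plan is to invoke the Gagliardo-Nirenberg interpolation inequality in dimension $2$ to interpolate the $L^{1+\beta/2}$ norm between the $L^\beta$ norm and the $L^2$ norm of the gradient. First, I would record the immediate consequences of the assumption that $M^\alpha_\ve(u_\ve)$ is bounded: from the definition \eqref{Mve} and the estimates \eqref{Lp estimates}, one has on $\Omega$ (and hence on $R \subset \Omega$) the two a priori bounds $\int|u_\ve|^\beta \leq C\ve^{1-\alpha}$ and $\int |\nabla u_\ve|^2 \leq C\ve^{-1-\alpha}$.

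Next, the Gagliardo-Nirenberg inequality in dimension $d=2$, which has the scaling relation $1/p = (1-\theta)/q$, at the choice $p = 1+\beta/2$ and $q = \beta$ forces $\theta = (2-\beta)/(2+\beta)$. Raising the inequality to the power $p$ produces clean exponents $p(1-\theta) = \beta$ on the $L^\beta$ norm and $p\theta = (2-\beta)/2$ on the Dirichlet norm:
\begin{equation*}
\int|u_\ve|^{1+\beta/2} \leq C \left(\int|u_\ve|^\beta\right) \|\nabla u_\ve\|_{L^2}^{(2-\beta)/2}.
\end{equation*}
Plugging in the two a priori bounds and then using $\beta = (4\alpha-2)/(\alpha+1)$ from Section 4, a short computation yields $(1+\alpha)(2-\beta) = 4-2\alpha$, so the exponent of $\ve$ on the right-hand side simplifies to $(1-\alpha) - (4-2\alpha)/4 = -\alpha/2$. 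Multiplying by $\ve^\alpha$ gives $\ve^\alpha\int|u_\ve|^{1+\beta/2} \leq C\ve^{\alpha/2} \to 0$ as $\ve\to 0$, which is the claim.

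The main delicate point I expect is justifying the Gagliardo-Nirenberg inequality in the range $q=\beta<1$, which is precisely the regime of interest (one checks that $\beta<1 \Leftrightarrow \alpha<1$), and on a bounded domain rather than on $\R^2$. To handle the quasi-norm issue I would either invoke the quasi-norm version of Gagliardo-Nirenberg (which is classical) or pass through Kato's inequality $|\nabla|u_\ve|| \leq |\nabla u_\ve|$ to reduce to the scalar nonnegative function $w=|u_\ve| \in H^1$. To handle the bounded-domain issue, the standard Gagliardo-Nirenberg inequality on $\Omega$ carries an extra additive lower-order term of the form $\|u_\ve\|_{L^\beta}^{1+\beta/2}$; however, using the bound $\int|u_\ve|^\beta \leq C\ve^{1-\alpha}$, this term becomes of order $\ve^{(1-\alpha)(2+\beta)/(2\beta)}$, and after multiplication by $\ve^\alpha$ the total exponent remains strictly positive, so this contribution also vanishes as $\ve\to 0$.
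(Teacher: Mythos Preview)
Your argument is correct, and the computation of exponents is clean. The approach is close in spirit to the paper's---both are interpolation arguments between the $L^\beta$ bound and the $H^1$ bound---but the implementations differ.

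The paper does not invoke Gagliardo--Nirenberg directly. Instead it truncates, setting $w_\ve=(|u_\ve|-1)_+$, observes from the $L^\beta$ bound that $|\{|u_\ve|>1\}|\leq C\ve^{1-\alpha}\to 0$, so $w_\ve$ vanishes on a set of uniformly positive measure; this makes Sobolev--Poincar\'e $\|w_\ve\|_{L^r}\leq C\|\nabla w_\ve\|_{L^2}$ (any $r<\infty$) available with only standard $q\geq 1$ Sobolev theory. It then interpolates via H\"older with a free conjugate pair $(p,q)$, obtaining the final exponent $\gamma_p=\alpha(2-p)/(2p)$, positive for any $p<2$. Your Gagliardo--Nirenberg step is morally the limiting case $p\to 1$ of this scheme (indeed $\gamma_p\to\alpha/2$, exactly your rate), which the paper's argument cannot quite reach since $p=1$ forces $q=\infty$ and $H^1\not\hookrightarrow L^\infty$ in dimension~$2$.

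What each approach buys: your route is shorter and gives the sharp rate $\ve^{\alpha/2}$, but rests on the Gagliardo--Nirenberg inequality in the quasi-norm range $q=\beta<1$, which, while true, is less universally quotable than the $q\geq 1$ case. The paper's truncation trick is slightly longer but entirely self-contained with textbook Sobolev--Poincar\'e, and in particular sidesteps both the $q<1$ issue and the bounded-domain lower-order term in one stroke (since $w_\ve$ vanishes on a large set, no additive term is needed). Your handling of the additive term $\|u_\ve\|_{L^\beta}^{1+\beta/2}$ is correct: the resulting exponent $\alpha+(1-\alpha)(2+\beta)/(2\beta)$ is indeed positive for $\alpha\in(1/2,1)$.
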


\begin{proof}
First of all write $|u_\ve|\leq 1+w_\ve$, where $w_\ve= \left( |u_\ve|-1\right)_+$ and then 
$$\int_R |u_\ve|^{1+\beta/2}\leq C+ C\int_R w_\ve^{1+\beta/2}.$$
Notice than that $|\{|u_\ve|>1\}|\leq \int_R |u_\ve|^\beta \leq C\ve^{1-\alpha}\to 0$, thus $w_\ve$ vanishes on a large part of $R$. This allows to apply standard Sobolev-Poincaré inequalities $||w_\ve||_{L^r}\leq C||w_\ve||_{H^1}$ (in dimension two any exponent $r<+\infty$ is admitted). Remember $w_\ve\leq |u_\ve|$ and $|\nabla w_\ve|\leq |\nabla u_\ve|$.

Now, for any pair of conjugate exponents $p$ and $q$, we have
\begin{multline*}
\int_R w_\ve^{1+\beta/2} \leq\left(\int_\Omega |w_\ve|^\beta\right)^{\frac 1p}
\left(\int_\Omega |w_\ve|^{(1+\beta/2-\beta/p)q}\right)^{\frac 1q}\\
\leq C\ve^{(1-\alpha)/p}||u_\ve||_{L^{(1+\beta/2-\beta/p)q}}^{1+\beta/2-\beta/p}\leq C\ve^{(1-\alpha)/p}||u_\ve||_{H^1}^{1+\beta/2-\beta/p}\\
\leq C\ve^{(1-\alpha)/p-(1+\beta/2-\beta/p)(\alpha+1)/2}.
\end{multline*}
Hence we have
$$\ve^\alpha \int_R |u_\ve|^{1+\beta/2}\leq C\ve^\alpha+C\ve^{\gamma_p},$$ 
where the exponent $\gamma_p$, from the previous computations, is given by
$$\gamma_p=\alpha+\frac{1-\alpha}{p}-\frac{(2p+\beta(p-2))(\alpha+1)}{4p}=\frac{\alpha(2-p)}{2p}.$$
It goes to zero provided $\gamma_p>0$, and it is sufficient to choose $p<2$ in order to get the result.
\end{proof}
\begin{remark}
Notice that the proof would have been easier if one supposed that $u_\ve\cdot n=0$ (since one could have directly applied Sobolev-Poincaré to $u_\ve$) on $\partial\Omega$, which is quite natural. Yet, for the sake of generality, we admitted possible divergences concentrated on the boundary, i.e.non vanishing values of the normal component. Actually, in this $H^1$ setting, the divergence of $u_\ve$ is seen as a measure on $\Ov$ which belongs to $L^2(\Omega)+L^2(\partial\Omega)$.
\end{remark}

Hence, we will ignore the last term and, for every $x$, we look at the quantity
$$\ve^{\alpha-1}A_\ve^{\beta-1} m'_\ve(x) +4c_0\ve^\alpha A_\ve^{1+\beta/2},$$
where $m'_\ve(x):= \int_{R_x'}v_\ve dy$. To estimate it from below, we will minimize over possible values of $A_\ve$. We have
\begin{equation}\label{min A ve}
\min_{A\in ]0,+\infty[}  \ve^{\alpha-1}A^{\beta-1} m+4c_0\ve^\alpha A^{1+\beta/2} = c_2m^\alpha,
\end{equation}
the minimum being realized by 
\begin{equation}\label{optimal A}
A=\left(\frac{m(1-\beta)}{\ve 2c_0 (2+\beta)}\right)^{2/(4-\beta)}.
\end{equation}
 We do not precise here the coefficient $c_2$ appearing in the minimal value but the correct computation is the one in the statement of the theorem. Notice that the exponent $2/(4-\beta)$ equals $(\alpha+1)/3$. For computing the last equality in \eqref{min A ve} we need to use the relations between $\beta$ and $\alpha$.

This is the first point where we start seeing an expression recalling somehow $M^\alpha$.

{\bf Step 2}
{\it Comparison with $M^\alpha$}
 
 One can call $\mu_\ve$ the positive measure $\left(\ve^{\alpha-1}|u_\ve|^\beta + \ve^{\alpha+1} |\nabla u_\ve|^2 \right)\cdot\lcal^2$. Since $\int_\Omega d\mu_\ve = M^\alpha_\ve(u_\ve)$, the measures $\mu_\ve$ stay bounded in the set of positive Radon measures on $\Omega$. Hence we can suppose $\mu_\ve\deb\mu$.
 
As we did previously, take a rectangle $R$ and keep the same notations.  Keeping the positive  $x-$part only and the derivative with respect to $y$ only one has  $\mu_\ve\geq \left(\ve^{\alpha-1}v_\ve^\beta + \ve^{\alpha+1} |v'_\ve|^2 \right)\cdot\lcal^2$, hence we get 
\begin{equation}\label{with m'}
\int_R \phi_1(y)d\mu_\ve\geq \int_a^b [m'_\ve(x)]^\alpha dx-r_\ve,
\end{equation}
$r_\ve$ being a negligible rest corresponding to the last term in \eqref{with CS}.

  We would like this estimate to pass to the limit as $\ve\to 0$, using $u_\ve\to u$. This is not yet possible: to get it, take again a function $\phi_2:R\to[0,1]$, similar to $\phi_1$, depending on $y$ only, but such that $\phi_2=1$ on $[a,b]\times [c+2\delta, d-2\delta]$ and $\phi_2(y)=0$ for $y\in [c,c+2\delta]\cup[d-2\delta,d]$. For every $x$, set 
$$m''_{\ve,+}(x):= \int_{R_x}v_\ve\phi_2 dy,\; m''_{\ve,-}(x):= \int_{R_x}[(u_\ve)_x]_-\phi_2 dy,\;m_\ve(x):= \int_{R_x}(u_\ve)_x\phi_2 dy.$$
Notice that estimate \eqref{with m'} implies $\int_R \phi_1(y)d\mu_\ve\geq \int_a^b [m''_{\ve,+}(x)]^\alpha dx$ (up to the negligible term $r_\ve$). Moreover, performing for every $x$ the same proof as before with the negative part instead of the positive one, and using $\max \{ m''_{\ve,+}(x), m''_{\ve,-}(x)\}\geq |m_\ve(x)|$, one gets
\begin{equation}\label{with |m|}
\int_R \phi_1(y)d\mu_\ve\geq \int_a^b |m_\ve(x)|^\alpha dx.
\end{equation}

The measures $m_\ve(x)dx$ are the projections on $[a,b]$ of $\phi_2\cdot(u_\ve)_x$. They obviously weakly converge to the projection of $\phi_2 u_x$. Yet, this weak convergence is not sufficient for getting the convergence (nor for lower semicontinuity) of the right hand side of \eqref{with |m|}. This is due to the non-convex behavior of the function $m\mapsto |m|^\alpha$. To prove this convergence we need more compactness (and hence a stronger convergence of $(\pi_x)_\#(\phi_2 (u_\ve)_x)$).

What we may prove is that the functions $x\mapsto m_\ve(x)$ are uniformly $BV$ in $x$ and this will allow for $L^1$ and pointwise convergence. We will use the fact that $u_\ve\to u$ in $\MO$ implies a bound on $\nabla\cdot u_\ve$. Take a function $\psi:[a,b]\to \R$ with $\psi(a)=\psi(b)=0$. Consider
\begin{multline*}
\int_a^b m_\ve(x)\psi'(x)dx=\int_R (u_\ve)_x(x,y) \phi_2(y)\psi'(x)\,dydx\\
=\int_R u_\ve(x,y)  \cdot \nabla(\phi_2(y)\psi(x))dydx-\int_R (u_\ve)_y(x,y)  \phi_2'(y)\psi(x)\,dydx\\
\leq |\nabla\cdot u_\ve|(\Ov) ||\psi||_{L^\infty(a,b)} + ||u_\ve||_{L^1}  ||\phi_2'||_{L^\infty(a,b)} ||\psi||_{L^\infty(a,b)} \leq C ||\psi||_{L^\infty(a,b)}.
\end{multline*}

This proves the BV bound we needed and implies that $(\pi_x)_\#(\phi u_x)$ is a measure on $[a,b]$ which ``belongs to BV'' (i.e. is absolutely continuous and has a BV density). Let us call $m(x)$ its density.  Moreover, one has $m_\ve(x)\to m(x)$ for almost any $x$. Passing to the limit in \eqref{with |m|} as $\ve\to 0$ one gets, by Fatou's Lemma,
$$\int_R \phi_2(y)d\mu\geq \int_a^b |m(x)|^\alpha dx. $$

It is quite straightforward that one can let $\delta$ go to $0$ and get rid of the functions $\phi_1$ and $\phi_2$ (which actually depend on $\delta$). This gives
\begin{equation}\label{already established}
\mu([a,b]\times ]c,d[)\geq  \int_a^b |\tilde{m}(x)|^\alpha dx,
\end{equation}
where $\tilde{m}(x)$ is defined as the density at $x$ of the projection $\pi_x$ of the measure $u_x$ restrained at $[a,b]\times ]c,d[$. Notice that this measure has a density as one can realize by writing it as a series of measures with densities with finite series of $L^1$ norms (i.e. we take a sequence of $\delta_n$ going to zero and see that the corresponding $m_n$ are all $L^1$ functions, and the series $\sum_n ||m_{n+1}-m_{n}||_{L^1}$ converges since $\sum_n \int |(\phi_2)_{n+1}-(\phi_2)_{n}|d|u_x|<+\infty$).

It is useful to see that this estimate implies that $u$, simply because of its attainability as a limit of $u_\ve$ with bounded $M^\alpha_\ve$ energies, is a rectifiable measure. This is proven thanks to the following Lemmas \ref{rectifiability}, \ref{Galpha}.

\begin{lemma}\label{rectifiability}
Suppose $u_\ve\to u$ in $\MO$ and $M^\alpha_\ve(u_\ve)\leq C$. Then $u$ is a one-dimensional rectifiable vector measure, i.e. it is of the form $u=U(M,\theta,\xi)$.
\end{lemma}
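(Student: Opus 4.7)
The plan is to exploit the estimate \eqref{already established} together with the freedom to choose the orientation of the rectangle $R$. Nothing in Steps 1--2 above used the canonical role of the $x$-axis: by rotating the whole argument, for every unit vector $e\in S^1$ and every closed rectangle $R$ whose sides are parallel to $e$ and $e^\perp$ one obtains
$$\mu(R)\;\geq\;\int_{\pi_e(R)}|\tilde m_e(t)|^\alpha\,dt,$$
where $\tilde m_e(t)$ denotes the one-dimensional density of the projection of $(u\cdot e)\res R$ onto the line $\R e$. This anisotropic estimate, applied with a well-chosen direction $e$ at each point, drives the whole proof.

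The first use is to rule out an absolutely continuous part of $u$ with respect to $\lcal^2$. Suppose by contradiction that $u^{ac}=g\cdot\lcal^2\neq 0$, and pick a Lebesgue point $x_0$ of $g$ together with a direction $e$ for which $\delta:=g(x_0)\cdot e>0$. For small $r>0$ let $Q_r$ be the $e$-oriented square of side $r$ centred at $x_0$. The Lebesgue point property gives $\tilde m_e(t)/r\to\delta$ in $L^1$ after the rescaling $t\mapsto t/r$, so $\tilde m_e(t)\geq r\delta/2$ on a subset of $\pi_e(Q_r)$ of length at least $r/2-o(r)$, whence
$$\int_{\pi_e(Q_r)}|\tilde m_e(t)|^\alpha\,dt\;\geq\; c\,r^{1+\alpha}.$$
Combined with the rotated \eqref{already established} this yields $\mu(Q_r)\geq c\,r^{1+\alpha}$, and since $\alpha<1$ we get $\limsup_{r\to 0}\mu(Q_r)/r^2=+\infty$. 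But for any finite Radon measure on $\R^2$ the set of points at which the upper $\lcal^2$-density is infinite is $\lcal^2$-negligible, contradicting the fact that $\lcal^2$-a.e. point of $\{g\neq 0\}$ is Lebesgue. Hence $u$ must be purely singular with respect to $\lcal^2$.

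It remains to upgrade this singularity into proper $1$-rectifiability, i.e.\ to exhibit a countably $\huno$-rectifiable set $M$, a multiplicity $\theta\colon M\to\R^+$ and an orientation $\xi\colon M\to S^1$ with $u=\theta\,\xi\cdot\huno_{|M}$; this is the main obstacle, since one must rule out concentrations of $u$ on ``fractal'' subsets of Hausdorff dimension strictly between $1$ and $2$. I would delegate this to the subsequent Lemma \ref{Galpha}: the anisotropic bound proved above controls, for $\huno$-almost every direction $e$ and almost every line $\ell$ orthogonal to $e$, the slice of $u\cdot e$ on $\ell$ and shows that it is a countable sum of Dirac masses whose weights satisfy $\sum\theta_i^\alpha<+\infty$, the concavity of $t\mapsto t^\alpha$ penalising any diffuse component. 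A Besicovitch--Federer projection-type argument then excludes purely unrectifiable subsets of positive $\huno$-measure from the support of $u$, and once the support is known to be countably $1$-rectifiable the existence of an $\huno$-density and of an approximate tangent to $M$ is automatic. The constraint $u\in\MO$ finally forces $\xi$ to be tangent to $M$ at $\huno$-a.e. point, since any transverse component would create a first-order distributional derivative which is not a measure.
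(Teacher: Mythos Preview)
Your first paragraph, ruling out an absolutely continuous part of $u$, is essentially correct (provided $x_0$ is also chosen so that the singular part of $u$ has vanishing two-dimensional upper density there, which holds $\lcal^2$-a.e.), but it is redundant: once the slices are shown to be purely atomic, any diffuse part is automatically excluded.

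The real content of the lemma lies in your second paragraph, and there the argument has a genuine gap. Estimate \eqref{already established} bounds $\mu(R)$ from below by $\int|\tilde m(x)|^\alpha\,dx$, where $\tilde m$ is the density of the projection of the \emph{whole} of $u_x\res R$; by itself this says nothing about the internal structure of the slice $\nu_x$ on the vertical segment $R_x$, so you cannot yet feed anything into Lemma~\ref{Galpha}. The missing step, which the paper carries out, is to partition $\Omega$ into $2^n$ thin horizontal strips $I_{i,n}$, apply \eqref{already established} on each strip separately, and \emph{sum}: since the strips are disjoint one obtains
\[
\mu(\Omega)\;\geq\;\int \sum_{i=0}^{2^n-1}\bigl|\nu_x(I_{i,n})\bigr|^\alpha\,dx
\]
uniformly in $n$ (here $\nu_x$ denotes the one-dimensional slice of $u_x$ on the vertical line through $x$, obtained by disintegration). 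This is precisely the finiteness of $G^{(sup)}_\alpha(\nu_x)$ for a.e.\ $x$, and only now does Lemma~\ref{Galpha} yield that $\nu_x$ is purely atomic.

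The concluding tool is also misidentified. The Besicovitch--Federer projection theorem concerns the $\huno$-nullity of almost every projection of a purely unrectifiable \emph{set}; what is needed here is the slicing criterion of White and Federer for rectifiability of flat chains: a normal $1$-current in $\R^2$ is rectifiable if and only if almost every $0$-dimensional slice by lines parallel to the coordinate axes is a countable sum of Dirac masses. Atomicity of the $\nu_x$ (and, symmetrically, of the $\nu_y$) is exactly this hypothesis, and the criterion delivers $u=U(M,\theta,\xi)$ directly, with the tangency of $\xi$ to $M$ built in; no separate projection argument is required.
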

\begin{proof}
We will use the rectifiability theorem proved through different techniques by Federer and White (see \cite{federer, white}) and already used to prove rectifiability of measures with finite $M^\alpha-$mass (see also \cite{xia2}). This theorem roughly states that $u$ is rectifiable if and only if almost all its $(d-1)-$dimensional slices parallel to the coordinate axis are countable collections of Dirac masses. 
Take the coordinate axis and disintegrate $u_x$ with respect to its projection $m(x)dx$ on the variable $x$, thus getting signed measures $\nu_x$. For every $n$, divide $\Omega$ into $2^n$ horizontal strips of equal width and call $f_{i,n}(x)$ the integral of $\nu_x$ on the $i$-th interval strip (without its boundary). The estimate \eqref{already established} that we already established easily gives
$$\mu(\Omega)\geq \int  m(x)^\alpha \sum_i f_{i,n}(x)^\alpha dx.$$
Up to choosing the levels where to put the boundary of the strips, we can ensure that for almost any $x$ no mass is given to the boundaries by $|u|$ (if this is not the case, simply translate a little bit the strips, and this will happen again for a countable set of choices only). Hence, when we pass from $n$ to $n+1$, the mass of the previous strip is exactly the sum of the mass of the two new strips.
Notice that, due to $|a+b|^\alpha\leq |a|^\alpha+|b|^\alpha,$ the sequence $n\mapsto  \sum_i f_{i,n}(x)^\alpha$ is increasing. Call $G(x)$ its limit: we have $\int m(x)^\alpha G(x) dx<+\infty$. 
This implies that $G$ is finite almost everywhere. Thanks to Lemma \ref{Galpha}, almost every $\nu_x$ is purely atomic, and the same may be performed on the direction $y$. This allows to apply the white's criterion and proves that $u$ is rectifiable.
\end{proof}

\begin{lemma}\label{Galpha}
For $\alpha<1$ and a measure $\nu$ on a interval (say $[0,1[$), set
$$G^{(sup)}_\alpha(\nu)=\sup\left\{\sum_{i=0}^{2^n-1} \left|\nu\left(I_{i,n}\right)\right|^\alpha;\quad n\in\N\quad I_{i,n}=\left[\frac{i}{2^n},\frac{i+1}{2^n}\right[\right\}$$
and
$$G_\alpha(\nu)=\begin{cases}
                 \sum_{k\in\N}(a_k)^\alpha & \textrm{if} \ \nu=\sum_{k\in\N}a_k\delta_{x_k} \\
                 +\infty & \textrm{otherwise} 
                 \end{cases}.$$
Then we have $ G^{(sup)}_\alpha(\nu)= G_\alpha(\nu)$, and in particular if $G^{(sup)}_\alpha(\nu)<+\infty$ then $\nu$ is purely atomic.
\end{lemma}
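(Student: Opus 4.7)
My plan rests on a single observation: for $\alpha<1$ and real numbers $a,b$ one has $|a+b|^\alpha\le|a|^\alpha+|b|^\alpha$. Applied to the splitting $I_{i,n}=I_{2i,n+1}\cup I_{2i+1,n+1}$ and summed in $i$, this shows that the sequence $S_n(\nu):=\sum_{i=0}^{2^n-1}|\nu(I_{i,n})|^\alpha$ is non-decreasing in $n$, so $G^{(sup)}_\alpha(\nu)=\lim_n S_n(\nu)$. Iterating the same subadditivity handles the atomic case $\nu=\sum_k a_k\delta_{x_k}$: for each $i$ one has $|\nu(I_{i,n})|^\alpha=|\sum_{x_k\in I_{i,n}}a_k|^\alpha\le\sum_{x_k\in I_{i,n}}|a_k|^\alpha$, and summing over $i$ each atom is counted once, yielding $S_n(\nu)\le\sum_k|a_k|^\alpha=G_\alpha(\nu)$. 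The matching lower bound is immediate: for every $K$, pick $n$ so large that $2^{-n}<\min_{j<k\le K}|x_j-x_k|$; then $x_1,\dots,x_K$ lie in distinct dyadic intervals at level $n$ and $S_n(\nu)\ge\sum_{k\le K}|a_k|^\alpha$, letting $K\to\infty$ concludes.

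To handle the case where $\nu$ has a nontrivial non-atomic part $\nu_c$, the goal is to prove $S_n(\nu)\to+\infty$. By the Hahn decomposition, at least one of $\nu_c^{\pm}$ has positive mass; denote it by $\mu$ and its total mass by $c>0$. The remaining components of $\nu$, namely $\nu_a$ and the opposite-sign continuous part, are both mutually singular with $\mu$, so by the classical differentiation statement ``$\lambda\perp\mu \Rightarrow \lambda(I_{n,x})/\mu(I_{n,x})\to 0$ for $\mu$-a.e.\ $x$'', one gets $\nu(I_{n,x})/\mu(I_{n,x})\to 1$ for $\mu$-a.e.\ $x$. Define the good index set $\G_n:=\{i:\nu(I_{i,n})\ge\tfrac12\mu(I_{i,n})\}$; a standard Egorov/monotone argument then gives $c_n:=\mu\bigl(\bigcup_{i\in\G_n}I_{i,n}\bigr)\to c$. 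Since $\mu$ is non-atomic, $\eta_n:=\max_i\mu(I_{i,n})\to 0$; combining this with the elementary inequality $t^\alpha\ge t\,\eta_n^{\alpha-1}$ (valid for $0\le t\le\eta_n$ because $\alpha-1<0$) I conclude
\begin{equation*}
S_n(\nu)\;\ge\;\sum_{i\in\G_n}|\nu(I_{i,n})|^\alpha\;\ge\;2^{-\alpha}\sum_{i\in\G_n}\mu(I_{i,n})^\alpha\;\ge\;2^{-\alpha}\,\eta_n^{\alpha-1}\,c_n\;\longrightarrow\;+\infty,
\end{equation*}
so that $G^{(sup)}_\alpha(\nu)=+\infty=G_\alpha(\nu)$, and in particular $G^{(sup)}_\alpha(\nu)<+\infty$ forces $\nu$ to be purely atomic.

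The main obstacle is exactly this last step, the signed non-atomic case: a pointwise bound like $|\nu(I_{i,n})|\ge c\,|\nu|(I_{i,n})$ can fail on individual intervals because of cancellation between $\nu_c^+$ and $\nu_c^-$, so one cannot simply replace $\nu$ by $|\nu|$ in $S_n$. The Hahn-plus-mutual-singularity argument above is what lets one isolate a single-sign non-atomic component $\mu$ whose mass survives, at fine dyadic scales, inside the absolute value $|\nu(I_{i,n})|$; once this is in place, the concavity-based lower bound $\sum t_i^\alpha\ge \eta^{\alpha-1}\sum t_i$ does the rest.
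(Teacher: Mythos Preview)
Your proof is correct. For the inequality $G^{(sup)}_\alpha\le G_\alpha$ you argue exactly as the paper does, via the subadditivity $|a+b|^\alpha\le|a|^\alpha+|b|^\alpha$. The difference lies in the other direction. The paper obtains $G^{(sup)}_\alpha\ge G_\alpha$ in one stroke: it builds the atomic measures $\nu_n:=\sum_i \nu(I_{i,n})\,\delta_{i2^{-n}}$, observes $\nu_n\rightharpoonup\nu$, and invokes the weak lower semicontinuity of $G_\alpha$ to get $G_\alpha(\nu)\le\liminf_n G_\alpha(\nu_n)=\liminf_n S_n(\nu)\le G^{(sup)}_\alpha(\nu)$. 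You instead split into two cases and argue directly: in the purely atomic case you separate finitely many atoms into distinct dyadic cells (which works because the measure is positive, so the cell containing $x_k$ has mass at least $a_k$); in the presence of a nontrivial continuous part you isolate one Hahn component $\mu$, use the dyadic differentiation theorem for mutually singular measures to show that on a set of $\mu$-mass tending to $c>0$ one has $|\nu(I_{i,n})|\ge\tfrac12\mu(I_{i,n})$, and then exploit $t^\alpha\ge\eta_n^{\alpha-1}t$ together with $\eta_n\to0$. The paper's route is shorter but leans on the lower semicontinuity of $G_\alpha$ as a black box (a known but not entirely trivial fact, essentially of the same nature as the lemma itself). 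Your route is self-contained, makes transparent \emph{why} a diffuse part forces $S_n\to+\infty$, and handles the signed case explicitly via the Hahn decomposition --- a genuine gain, since the slices $\nu_x$ to which the lemma is applied are signed.
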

\begin{proof}
Let us start from proving $G^{(sup)}_\alpha(\nu)\geq G_\alpha(\nu)$: for any $n$, build a measure $\nu_n$ which is purely atomic, with one atom at each point $i2^{-n}$ and choose the mass of such an atom equal to that of $\nu$ on $I_{i,n}$. This measures converge weakly to $\nu$, hence we have
$$G_\alpha(\nu)\leq\liminf G_\alpha(\nu_n)\leq G^{(sup)}_\alpha(\nu).$$

Then, we prove the opposite inequality. We can suppose $G_\alpha(\nu)<+\infty$. Hence $\nu=\sum_{k\in\N}a_k\delta_{x_k}$ is purely atomic and for every pair $(i,n)$ we have, thanks to subadditivity,
$$\left|\nu\left(I_{i,n}\right)\right|^\alpha\leq \sum_{k\,:\,x_k\in I_{i,n}}a_k^\alpha.$$
Summing up, we get
$$\sum_{i=0}^{2^n-1} \left|\nu\left(I_{i,n}\right)\right|^\alpha\leq G_\alpha(\nu)$$
and the proof is obtained by taking the sup over $n$.
\end{proof}

 Once we know about $u$ being rectifiable, one can choose rectangles $R$ shrinking around a tangent segment to the set $M$ at a point $x_0$ (this works for $\huno-$almost any point  $x_0$) and get $\mu\geq \theta^\alpha\cdot\huno_{|M}$, which implies the thesis.

This kind of proofs follow a standard scheme, i.e. considering measures $\mu_\ve$ whose mass gives the value of the approximating energy, and providing estimates on the  limit measure $\mu$. This estimates are obtained through local inequality on $\int_R d\mu$, so that shrinking $R$ around a point one gets information on the density of $\mu$ (here it is the density w.r.t. $\huno$). Similar proofs (typically comparing $\mu$ to $\lcal^d$ instead of $\huno$), are quite used in $\Gamma-$convergence problems in the setting of transport and location: see for instance \cite{BouJimRaj} and \cite{MosTil}

\subsection{$\Gamma-\limsup$ inequality}

{\bf Step 3}
{\it The case of a single segment}

Consider the case $u=\theta\cdot\huno_{|S}$, being $S$ a segment (for simplicity, $S=[0,1]\times \{0\}$). For producing a ``recovery sequence'' one can inspire himself at the lower bound proof. We will look for a profile $u_\ve$ with the following properties:
\begin{itemize}
\item the $x-$component only of $u_\ve$ must be present and must have the same sign, so that $u_\ve=v_\ve e_1$;
\item $\nabla v_\ve=v'_\ve e_2$ (i.e. $v_\ve$ only depends on $y$);
\item the Cauchy-Schwartz inequality used in \eqref{with CS} must be an equality, i.e. one needs 
$$ v'_\ve = \pm \frac 1\ve \sqrt{v_\ve^\beta-L_\ve v_\ve };$$
\item the total variation must actually be given by twice the maximum (i.e. $v_\ve$ must be monotone on the two separate intervals before and after reaching the maximum: we will realize it by taking a maximal value at $y=0$ and symmetric monotone profiles around $0$);
\item $v_\ve$ must vanish at the boundary of a certain rectangle, so that one gan avoid using the function $\phi_1$;
\item the maximum $A_\ve$ must be optimal in \eqref{min A ve};
\item as $\ve\to 0$ weak convergence to the measure $u=\theta\cdot\huno_{|S}$ is needed: we will realize it by taking different rescaling of the same profile $z$ (say, $v_\ve(y)=A_\ve z(A_\ve y)$), so that $A_\ve\to \infty$ and $\int_\R z(t)dt=\theta$ will be sufficient.
\end{itemize}
Look at the conditions that the profile $z$ must satisfy: we need $z(0)=1$ and $z'(t)t\leq 0,$ so that the maximum of $v_\ve$ will be $A_\ve,$ realized at zero, and the monotonicity conditions as well will be satisfied. Moreover, on $t\geq 0$ (for $t\leq 0$ just symmetrize), $z$ must satisfy
$$z'(t)=-\frac{A_\ve^{\beta/2}}{\ve A_\ve^2}\sqrt{z^\beta-z}.$$
Thanks to \eqref{optimal A}, the ratio $A_\ve^{\beta/2}/(\ve A_\ve^2)$ equals $(1-\beta)\theta/(2c_0(2+\beta)).$ This is very good since there is no more $\ve$.

Notice that this equation has a strong non-uniqueness, since the function $z\mapsto \sqrt{z^\beta-z}$ is non-Lipschitz. For instance the constant one is a solution, but a solution going from $1$ to $0$ exists as well (just get it by starting from a different starting point and see that it has to reach both $0$ and $1$ in finite time).  This means that there are several solutions, and $\int_0^\infty z(t)dt$ may be any possible value larger than $\int_0^\infty z_0(t)dt$ ($z_0$ being the only solution with no flat part $z=1$ around $t=0$).

We want now to check that $\theta$ is larger than the lower bound for the integrals, and this gives the final property that $z$ needed to satisfy.

We need
$$\frac \theta 2 \geq \int_0^\infty z_0(t)dt= \int_0^1 \frac{z\,(1-\beta)\, \theta}{\sqrt{z^\beta-z}\,(2+\beta)\,2c_0}dz$$
(the integral has been computed by change of variables $z=z_0(t)$, which is possible for the solution $z_0$ only, since it is injective).
This means that we must compare $C_0:=\int_0^1 \frac{z}{\sqrt{z^\beta-z}}dz$ and $c_0=\int_0^1 \sqrt{z^\beta-z}dz$ and prove $C_0\leq c_0(2+\beta)/(1-\beta)$.

Compute $c_0$ by integrating by part:
$$c_0=\int_0^1 1\cdot \sqrt{z^\beta-z}dz= - \int_0^1 z\frac{\beta z^{\beta-1}-1}{2\sqrt{z^\beta-z}}dz=\frac{(1-\beta)}{2} C_0-\frac\beta 2 c_0,$$
which implies $c_0=C_0 \frac{1-\beta}{2-\beta}$ and is enough to get the desired inequality.

Obviously, one needs after that to perform a correction of $u_\ve$ near $x=0$ and $x=1$ so that the function actually belongs to $H^1$ (in order to avoid discontinuities at the two ends of the segment), and to control the extra energy one pays, as well as the divergence. 

One possibility for the case of the single segment is the following: the profile we got for $v_\ve$ is of the form $A_\ve z(A_\ve t)$ and one can simply replicate it radially on a half disk, so as to ensure regularity. In this case we need to estimate. Denote for simplicity by $f_\ve(r)$ the radial profile one performs and by $B_\ve$ the half ball where it is non-null.
We need to estimate
$$\ve^{\alpha-1}\int_{B_\ve}|f_\ve|^\beta + \ve^{\alpha+1}\int_{B_\ve}|f'_\ve|^2,\quad\int_ {B_\ve}|f'_\ve| ,\mbox{ and }\int_ {B_\ve}|f_\ve| $$
Just use $|f_\ve|\leq A_\ve,$ $|f_\ve|\leq C A_\ve^2$ and $|B_\ve|\leq C A_\ve^{-2}$ (and the value of $A_\ve$ which is of the order of $\ve^{-2/(4-\beta)}$): both terms in the energy will be of the order of $\ve^{(\alpha+1)/3}$, the second term to be estimated (the divergence) will be bounded and the last will be of the order of $\ve^{(\alpha+1)/3}$ as well.

In this way we have produced a sequence $u_\ve$ that converges to $u$ (weak convergence is ensured by construction, weak convergence of $\nabla\cdot u_\ve$ comes from the bound we just proved).

{\bf Step 4}
{\it Conclusion by density}        

As we pointed out in Section 2, it is well known from the works by Q. Xia on (see \cite{xia1}) that the energy $M^\alpha$ is obtained as a relaxation of the same energy defined on finite graphs.
This implies that the class of finite graphs is ``dense in energy'' in the space $\MO$ and $\Gamma-$convergence theory guarantees that it is enough to build recovery sequences for such a class (see \cite{introgammaconve}). One can also impose the condition $G\cap\partial\Omega=\emptyset$, so to avoid problems at the boundary.

For dealing with $u=u_G$, where $G$ is a finite graph, one can simply consider separately the segments composing $G$ and apply the previous construction of the previous step. Possible superpositions of the part of $u_\ve$ coming from different segments will happen only on regions whose size is of the order of $A_\ve^{-1}$ and hence negligible in the limit (and on such a region, one can use $|u_1+u_2|^\beta\leq |u_2|^\beta+|u_2|^\beta$ and $|\nabla(u_1+u_2)|^2\leq 2|\nabla u_1|^2+2|\nabla u_1|^2$). Not only, the number of nodes will be finite and hence the bound on the divergence will stay valid.

{\bf Improvement}
{\it Better connections at the junctions}

The construction for the $\Gamma-\limsup$ that we just detailed provides a sequence $u_\ve$ converging to $u$ in $\MO$ which works very well in the case of a single segment but which could be improved in general. Actually, in the case of a single segment the divergence of $u_\ve$ has the same mass as that of $u$, since it replaces two Dirac masses at the two extremal points with two diffuse masses, concentrated on half-balls of radius $cA_\ve^{-1}$ around the points. The mass of $u_\ve$ itself is only slightly larger of the mass of $u$, due to the part we added at the extremities. 

This changes a lot when one considers more than one segment, since the divergence of $u_\ve$ at the nodes will be given by the superposition of different densities, each corresponding to a segment: the integral will vanish due to compensations, but $||\nabla\cdot u_\ve||_{L^1}$ could have increased a lot.

This may be corrected thanks to the following lemma.

\begin{lemma}\label{lorenzo}
If $g\in C^1(B_R)$ is a function with zero mean on a ball of radius $R$, then there exists a vector field in $C^1(\overline{B_R})$ such that $\nabla\cdot v = g$, $v=0$ on $\partial B_R$ and
$$||v||_{L^\infty}\leq C\left(R||g||_{L^\infty}+R^2||\nabla g||_{L^\infty}\right);\quad ||\nabla v||_{L^2}^2\leq C\left(R^2||g||_{L^\infty}^2+R^4||\nabla g||_{L^\infty}^2\right),$$
where $C$ is a universal constant.
\end{lemma}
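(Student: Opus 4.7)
My strategy is to reduce to the unit ball by scaling and solve the problem there via a standard Bogovski\u{\i}-type operator. Setting $\tilde{g}(y):=R\,g(Ry)$ and $\tilde{v}(y):=v(Ry)$ for $y\in B_1$, the identity $\nabla\cdot v = g$ on $B_R$ is equivalent to $\nabla\cdot\tilde{v} = \tilde{g}$ on $B_1$, with $\|\tilde{g}\|_{L^\infty}=R\|g\|_{L^\infty}$, $\|\nabla\tilde{g}\|_{L^\infty}=R^2\|\nabla g\|_{L^\infty}$, $\|\tilde v\|_{L^\infty}=\|v\|_{L^\infty}$, and $\|\nabla v\|_{L^2(B_R)}^2 = R^{d-2}\|\nabla\tilde v\|_{L^2(B_1)}^2$. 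In dimension two the exponent $d-2$ vanishes, so the $L^2$ seminorm of the gradient is itself scale-invariant. Hence it suffices to establish, on $B_1$, the existence of $\tilde v\in C^1(\overline{B_1})$ with $\nabla\cdot\tilde v=\tilde g$, $\tilde v=0$ on $\partial B_1$, $\|\tilde v\|_{L^\infty}\le C\|\tilde g\|_{L^\infty}$ and $\|\nabla\tilde v\|_{L^2}\le C\|\tilde g\|_{L^\infty}$; after scaling back these produce the leading terms $CR\|g\|_{L^\infty}$ and $CR^2\|g\|_{L^\infty}^2$, and the $\|\nabla g\|_{L^\infty}$ terms in the statement are then pure slack.

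For the existence on $B_1$ I would invoke the Bogovski\u{\i} operator: the explicit linear integral map $T$ that, to any zero-mean $h\in L^p(B_1)$, associates a vector field $Th\in W^{1,p}_0(B_1)$ with $\nabla\cdot(Th)=h$ and $\|Th\|_{W^{1,p}}\le C_p\|h\|_{L^p}$ for every $1<p<\infty$, and which moreover satisfies the Schauder-type bound $\|Th\|_{C^{1,\alpha}}\le C_\alpha\|h\|_{C^{0,\alpha}}$ for $0<\alpha<1$ (cf.\ Galdi's monograph on Navier\textendash Stokes). Since $\tilde g\in C^1\subset C^{0,\alpha}$, $\tilde v:=T\tilde g$ lies in $C^{1,\alpha}(\overline{B_1})\subset C^1(\overline{B_1})$; the $W^{1,2}$ bound gives $\|\nabla\tilde v\|_{L^2}\le C\|\tilde g\|_{L^2}\le C\|\tilde g\|_{L^\infty}$, while the $W^{1,p}$ bound for any $p>2$, combined with the Sobolev embedding $W^{1,p}(B_1)\hookrightarrow L^\infty(B_1)$, yields $\|\tilde v\|_{L^\infty}\le C\|\tilde g\|_{L^p}\le C\|\tilde g\|_{L^\infty}$.

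The main obstacle is really the desire to avoid quoting Bogovski\u{\i} as a black box. The most hands-on alternative in 2D would be to first solve the Neumann problem $\Delta\phi=\tilde g$, $\partial_\nu\phi=0$ on $\partial B_1$ (solvable exactly because $\int\tilde g=0$), so that $w:=\nabla\phi$ already has divergence $\tilde g$ and vanishing normal trace, and then to subtract a correction $\nabla^\perp\psi$ for a stream function $\psi$ supported in an annular collar of $\partial B_1$, vanishing on $\partial B_1$ and with $\partial_\nu\psi$ prescribed to cancel the tangential trace of $w$ (which is bounded in $C^{0,\alpha}$ by Schauder estimates on $\phi$). The $L^\infty$ bound on $v=w-\nabla^\perp\psi$ then comes from Schauder on $\phi$ and an explicit choice of $\psi$, and the $L^2$ bound on $\nabla v$ from the $H^2$ estimate on $\phi$. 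Either route produces the universal constant $C$ on $B_1$, and the scaling argument above finishes the proof.
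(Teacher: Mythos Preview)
Your proposal is correct, and it takes a genuinely different route from the paper's proof. The paper does not scale to $B_1$ nor invoke Bogovski\u{\i}: instead it solves the \emph{Dirichlet} problem $\Delta w=g$, $w=0$ on $\partial B_R$, so that $\nabla w$ already has zero tangential trace, and then corrects the (nonzero) normal trace by adding $\mathrm{Rot}(\nabla\psi)$, where $\psi$ is a radial cutoff times an antiderivative along $\partial B_R$ of $\partial w/\partial n$ (which exists precisely because $\int g=0$). The estimates are then read off from classical Schauder and $H^k$ elliptic regularity for $w$, plus a trace inequality for $D^2\phi$; this is where the $R^2\|\nabla g\|_{L^\infty}$ and $R^4\|\nabla g\|_{L^\infty}^2$ terms enter.

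Your hands-on alternative (Neumann problem for $\phi$, then subtract $\nabla^\perp\psi$ to kill the tangential trace) is exactly the mirror image of the paper's construction, swapping the roles of normal and tangential boundary components. What your approach buys is cleaner bookkeeping: the scaling argument isolates the universal constant on $B_1$, and the Bogovski\u{\i} $W^{1,p}$ bound with Sobolev embedding shows that $\|v\|_{L^\infty}$ and $\|\nabla v\|_{L^2}$ are actually controlled by $\|g\|_{L^\infty}$ alone, so the $\|\nabla g\|_{L^\infty}$ terms in the statement are slack (they are only used to guarantee $v\in C^1$). The paper's route is more self-contained---it avoids quoting Bogovski\u{\i} as a black box and makes the two-dimensional rotated-gradient trick explicit---at the cost of carrying the extra $\|\nabla g\|_{L^\infty}$ dependence through the Schauder estimates.
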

\begin{proof}
Take $w$ the solution of the elliptic problem
$$\begin{cases}\Delta w =g&\mbox{ in }B_R,\\
			 w=0 &\mbox{ on }\partial B_R.\end{cases}$$
Thanks to $\int g = 0$ one has $\int_{\partial B_R} \partial w/\partial n =0$. This allows to define a function $\phi:\partial B_R\to \R$ with $\phi'=\partial u/\partial n$. Take then a cut-off function $\chi(r)$ such that $\chi(r)=1$ if $r\in[2R/3, R]$ and $\chi(r)=0$ if $r\in [0,R/3]$, $|\chi|\leq 1,$ $|\chi'|\leq C/R$, $|\chi''|\leq C/R^2$ and define $\psi(x)=\chi(|x|)\phi(Rx/|x|)$.

Now take $v=\nabla w +Rot( \nabla \psi)$, where $Rot$ denotes a 90° clockwise rotation.  In this way $\nabla\cdot v = \nabla \cdot \nabla w = g, $ since the rotated gradient part is divergence-free, and both the normal and the tangential component of $v$ on the boundary vanish (since the tangential component of $\nabla \psi$ compensates the normal one of $\nabla $ and the tangential component of $\nabla w$ and the normal of $\nabla\psi$ are zero). 

We only need to check the bounds on the norms. These bounds come from standard elliptic regularity theory (see for instance \cite{gilbargtrudinger}), since one has
\begin{eqnarray*}
||\nabla w||_{L^\infty}&\leq& C\left(R||g||_{L^\infty}+R^2||\nabla g||_{L^\infty}\right),\\
||\phi||_{L^\infty}&\leq& CR||\nabla\phi||_{L^\infty}\leq CR||\nabla w||_{L^\infty},\\
||D^2w||_{L^2}^2&\leq& C||\Delta w||_{L^2}^2=C||g||_{L^2}^2,\\
||D^2\phi||_{L^2}^2&\leq& CR||D^2 w||_{L^2(\partial B_R)}^2\leq CR^2||D^3w||_{L^2}^2+C||D^2w||_{L^2}^2\\
&\leq& CR^2||\nabla g||_{L^2}^2+C||g||_{L^2}^2.
\end{eqnarray*}
The last line of inequalities come from the combination of a trace inequality in Sobolev spaces applied to $D^2w$ (where the two coefficients of the gradient and the function part have different scaling with respect to $R$) with a regularity estimate for Dirichlet problems (estimating the $H^{k+2}$ norm of the solution with the $H^k$ norm of the datum).
Combining al the ingredients give the desired estimate (we pass to the $L^\infty$ norms for the sake of simplicity).
\end{proof}

With this lemma in mind, we can notice that at every node, the divergence of the function $u_\ve$ we gave before is composed by the zero-mean sum of some functions $g_i$ of the form $f'(r)(x\cdot e_i)/r$ (this is the divergence of the vector field directed as $e_i$ with radial intensity we used above), where $e_i$ is the direction of the corresponding segment. Each function is supported on a half ball whose simmetry axes follows $e_i$ and the radius is of the order of $A_\ve^{-1}\approx \ve^{2/(4-\beta)}$. Setting for each node $g=\sum_i g_i$ one has $\int g=0$, the support of $g$ is included in $B_R$ with $R\approx \ve^{2/(4-\beta)}$ and $||g||_{L^\infty}\leq CR^{-2}$ and $||\nabla g||_{L^\infty}\leq CR^{-3}$.

Hence, one can add at every node a vector field $v$ as in Lemma \ref{lorenzo} so as to erase the extra divergence. This would cost no more than something of the order of
$$\ve^{\alpha-1}R^2||v||_{L^\infty}^\beta+\ve^{\alpha+1}||\nabla v||_{L^2}^2\leq C\ve^{\alpha-1}R^{2-\beta}+C\ve^{\alpha+1}\frac{1}{R^2}=C\ve^{(\alpha+1)/3}.$$

Also the mass of the vector field is not that changed, since we only added $R^2||v||_{L^\infty}\leq C\ve^{(\alpha+1)/3}$.

\section{Applications and perspectives}

Besides the interesting comparison aspects of this result with respect to the similar ones in the approximation of free discontinuity problems, one of the main goal of this study concerned possible numerical applications, as we mentioned in the introduction. 

We want to replace the problem of minimizing $M^\alpha$ under divergence constraints with a simpler problem, i.e. minimizing $M^\alpha_\ve$. 

The idea would be solving
\begin{equation}\label{prob xia ve}
\min \left\{M^\alpha_\ve(u)\; : \; \nabla\cdot u = f_\ve\right\},
\end{equation}
being $f_\ve$ a suitable approximation of $f=f^+-f^-$., and proving that the minimizers of \eqref{prob xia ve} converge to the minimizers of \eqref{prob xia}. 

Theorem \ref{main} proves a $\Gamma-$convergence result, which should give the convergence of the minimizers, but the problem is that we never addressed the condition $\nabla\cdot u = f_\ve$, nor we discussed the choice of $f_\ve$. We will come back to this question, which is still open, later.

What we will consider here are penalization methods, which are quite natural as far as numerics is concerned. One could decide to replace the (quite severe) condition $\nabla\cdot u = f$ at the limit with a weaker one, concerning a distance between $\nabla\cdot u$ and $f$, possibly with a very high penalization coefficient. 
Considering problems of the form
\begin{equation}\label{prob xia ve pen}
\min \quad M^\alpha_\ve(u)+G( \nabla\!\cdot\! u;  f),
\end{equation}
where $G$ is a functional defined on pairs of finite measures, which is continuous w.r.t. weak convergence, and such that $G((\mu,\nu),(f^+,f^-))=0$ if and only if $\mu=\nu$, would play the game, since the $\Gamma-$convergence result could be easily extended, thanks to the last property of this kind of convergence that we presented in Section \ref{gamma conv sec}.

Yet, this would converge to 
\begin{equation}\label{prob xia pen}
\min \quad M^\alpha(u)+G(\nabla\!\cdot\! u;  f),
\end{equation}
 which in general is not exactly the same as imposing $\nabla\cdot u = f$. If on the contrary one wants a penalization of the kind $\ve^{-1}G$, so that at the limit one needs $G=0$, i.e. $\nabla\cdot u = f$, then the whole $\Gamma-$convergence result would have to be re-established (and would present some difficulties), since we are no more in the same framework of adding a same continuous functional.

Hence, we will stick to the case of a single penalization and we will start from the following consideration which works quite well if the divergence is given by the difference of two fixed mass positive measures.

As explained in \cite{xia1}, the quantity $d_\alpha(\mu,\nu):=\min \{M^\alpha(u)\; : \; \nabla\!\cdot\! u = \mu-\nu\}$ is a distance on the set of positive measures with the same finite mass that metrizes weak convergence.

As a consequence of $d_\alpha$ being a distance, due to triangular inequality, solving
$$\min_{\mu^+,\mu^-\in\pical(\Ov)}2d_\alpha(f^+,\mu^+)+\min \{M^\alpha(u)\; : \; \nabla\cdot u = \mu^+-\mu^-\}+2d_\alpha(f^-,\mu^-)$$
amount to choosing $\mu^\pm=f^\pm$ and solving $\min \{M^\alpha(u)\; : \; \nabla\cdot u = f^+-f^-\}$.

This means that choosing $2d_\alpha$ as a penalization is a clever strategy (even if it only allows for considering probabilities or, in general, fixed mass measures).

To describe a precise problem where we can insert this penalization we need to introduce a slightly different space. Consider the space $Y(\Omega)\subset\MO\times\pical(\Ov)\times\pical(\Ov)$ defined by
$$Y(\Omega):=\{(u,\mu,\nu)\in\MO\times\pical(\Ov)\times\pical(\Ov)\;:\;\nabla\cdot u =\mu-\nu\},$$
endowed with the obvious topology of componentwise weak convergence.

On this space consider the functionals 
\begin{equation}\label{functionals with pen}
(u,\mu,\nu)\mapsto M^\alpha_\ve(u)+2d_\alpha(f^+,\mu)+2d_\alpha(f^-,\mu).
\end{equation}
It is an easy consequence of our previous $\Gamma-$convergence result and of the fact that the quantity $G((\mu,\nu),(f^+,f^-)):=2d_\alpha(f^+,\mu)+2d_\alpha(f^-,\mu)$ is continuous on this space that we still have $\Gamma-$convergence to the functional $M^\alpha(u)+G((\mu,\nu),(f^+,f^-))$. The minimization of this last functional being equivalent to the minimization of $M^\alpha(u)$ under the divergence constraint $\nabla\cdot u=\mu-\nu$ one has obtained a useful approximation of any branched transport problem.

Notice that in this framework the imprvement that we proposed at the end of Section 5 is crucial: without this kind of correction on the divergence, one would got approximating measures $u_\ve$ (in the $\Gamma-\limsup$ construction) whose divergence is no more the difference of two probabilities, since some extra mass at every node is added both to the positive and to the negative part. 

Nevertheless, the approximation \label{functionals with pen} is not satisfactory yet, especially for numerical purposes: we are trying to suggest methods to approximate the minimization of $M^\alpha$ and we propose to use the distance $d_\alpha$ itself!! For computing it one should probably solve a problem of the same kind and no progress would have been done. 

A possible escape we suggest is replacing $d_\alpha$ with other quantities, which are larger but still vanish when the two measures coincide. This is possible for instance thanks to \cite{MorSan}, where the inequality $d_\alpha\leq C W_1^{2\alpha-1}$ is proven, $W_1$ being the usual Wasserstein distance on $\pical(\Ov)$. We will call $G_1$ the functional $G_1((\mu,\nu),(f^+,f^-)):=CW_1^{2\alpha-1}(\mu,f^+)+CW_1^{2\alpha-1}(\nu,f^-)$. The distance $W_1$ has the advantages of being independent of the branched transport problem and of admitting more numerical methods for computing it (it is, by the way, a dual Lipschitz norm). Notice that other possible attachment terms, easier to compute, such as the $L^2$ norm, are not adequate because they are not continuous with respect to the convergence we have (and because in general we do not want to require $f\in L^2$).

It is anyway important to stress that, the approach on $\MO$ without penalization stays useful for a lot of problems where the divergence is not prescribed but enters the optimization (think at $\min_\mu d_\alpha(\mu,\nu)+F(\mu)$). Some of this problems are addressed in \cite{landscape}, for instance for urban planning or biological shape optimization. 

Yet, even if there are, as we showed, was to overcome the problem, we find anyway interesting to ask the following question:

\begin{question}
Given $f$, is it possible to find a suitable sequence $f_\ve\deb f$ so that one can prove $\Gamma-$convergence of the functionals $u\mapsto M^\alpha_\ve(u)+I_{\nabla\cdot u = f_\ve}$ to $u\mapsto M^\alpha(u)+I_{\nabla\cdot u = f}$ (being $I$ the indicator function in the convex analysis sense, i.e. $+\infty$ if the condition is not verified or zero if it is)? is it possible to find $f_\ve$ explicitly, for instance as a convolution of $f$ with a given kernel?
\end{question}

The second issue we want to address, after the one concerning divergence constraints, deals with the convergence of the minimizers. $\Gamma-$convergence is quite useless if we cannot deduce that the minimizers $u_\ve$ converge, at least up to subsequences, to a minimizer $u$. Yet, this requires a little bit of compactness. The compactness we need is compactness in $\MO$, i.e. we want bounds on the mass of $\nabla\cdot u_\ve$ and of $u_\ve$. The first bound, has been guaranteed by the fact that we decided to stick to the case of difference of probability measures. On the contrary, the bound on $|u_\ve|(\Omega)$ has to be proven.

Notice that $M^\alpha_\ve(u_\ve)\leq C$ will not be sufficient for such a bound, as one can guess looking at the limit functional. Think at a finite graph with a circle of length $l$ and mass $m$ on it: its energy is $m^\alpha l$ which provides no bound on $ml$ (its mass), if $m$ is allowed to be large. Actually, what happens on the limit functional is ``bounded energy configuration have not necessarily bounded mass, but optimal configuration do have''. This is due to the fact that, if $f^+$ and $f^-$ are probabilities, then $m\leq 1$ on optimal configurations (and no cycle are possible, by the way). Notice that this statement does not depend on $m\mapsto m^\alpha$ being concave, but simply increasing in $m$. The same kind of behavior is likely to be true on the approximating problems, but after months of work a proof of this fact did not appear. 

Hence we conclude with two points: we ask a question and we give a final suggestion for getting a useful $\Gamma$-convergence result.

The suggestion is quite {\it naïve} : just take a sufficiently large number $K$ so that every minimizer $u$ of the limit problem satisfies $|u|(\Ov)\leq K$ and then restrict the analysis to the compact subset $Y_K(\Omega):=\{(u,\mu,\nu)\in Y(\Omega)\;:\;|u|(\Ov)\leq K\}$. Unfortunately, there exists no continuous functional $\Psi$ on $Y(\Omega)$ which is coercive on the mass of $u$. If it was the case, one could have added a term f the form $F(u)=\left(\Psi(u)- K\right)_+^2$ to the limit and approximating functionals. It is on the contrary a reasonable question, simpler than the one we are going to detail below, whether there exists a continuous functional $\Psi$ such that, for the minimizers $u_\ve$, the condition $\Psi(u_\ve)\leq C$ implies a bound on the mass (taking advantage of possible extra estimates on $u_\ve$). 

Thus, the problems 
$$\min\left\{ M^\alpha_\ve(u)+G_1((\mu,\nu)(f^+,f^-))\;(u,\mu,\nu)\in Y_K(\Omega)\right\}$$
approximate (with $\Gamma-$convergence) the limit problem given by $\min_{Y_K(\Omega)}\; M^\alpha(u)+G_1((\mu,\nu),(f^+,f^-)) $, which is equivalent to $\min\; M^\alpha(u)\,:\,\nabla\cdot u = f$.

Here as well a proper use of the estimate given in the last improvement in Section 5 are important. Actually, one needs to produce a recovery sequence $u_\ve$ for the $\Gamma-\limsup$ construction which does not violate the mass constraint given by $K$. This may be obtained in the following way: instead of considering as a dense subset the set of all vector measures $u$ corresponding to finite graphs, take the set of all such measures $u$ satisfying $|u|<K$. This set is still dense in energy. Afterwards, one notices that the construction which was provided satisfied $|u_\ve|(\Ov)\leq |u|(\Ov)+C\ve^{\alpha+1}$ (where the constant $C$ depends actually on $u$, but not on $\ve$, since it depends on the number of nodes and of branches at every node). This means that, if $u$ satisfies the strict inequality $|u|<K$, this will be true for $u_\ve$ as well for $\ve$ sufficiently small. 

And, as obvious, the question is
\begin{question}
Prove a bound on the $L^1$ norm of the minimizers $u_\ve$ (or on suitable minimizers $u_\ve$, if needed). If possible, prove it for minimizers $u_\ve$ which minimize $M^\alpha_\ve$ under a divergence constraint $\nabla \cdot u_\ve = f_\ve$ (so that it will be true even if we add penalizations on the divergence). Is is possible to do it simply by writing and exploiting optimality conditions under the form of elliptic (second or fourth order) PDEs? do we need to pass through more geometrical tools such has some surrogate of the no-cycle condition? Is this bound equivalent to a bound on some $m^\alpha$ (being $m$ the flux thrugh some segments or curves, as in what we proved in Step 1, where $m(x)$ is the flux through $R_x$) coupled with some bound on the fluxes $m$?
\end{question}

For finishing this questioning section, here is the last natural one:
\begin{question}
Prove the same results as in this paper or investigate what happens in $\R^d$, for $d\geq 3$.
\end{question}

Notice that our proofs almost never used (up to the last improvement in Section 5) any gradient decomposition of vector fields in $\R^2$ such as $u=\nabla\phi+Rot\nabla\psi$. Yet, this would have been a typical trick for managing divergences in two dimensions. In our opinion the point where we used the most the fact that we are in $\R^2$ is when we disintegrate with respect to $x$ and we estimate the total variation in $y$ by the oscillation (which is very one-dimensional).

\section*{Acknowledgements}
\noindent
Jean-Michel Morel is warmly thanked for the impulse he gave to this subject at his very starting point in 2006 and for the constant help and attention towards both the topic and the author since then.

Giovanni Alberti and Giuseppe Buttazzo contributed to the current version of the paper by means of the interesting remarks and clarifying comments they proposed when this work was first presented in a seminar in Pisa. 

The author also took advantage of the support of the ANR project OTARIE (Optimal transport : Theory and Applications
to cosmological Reconstruction and Image processing) and of the French-Italian Galilée Project ``Modèles de réseaux de transport:
phénomènes de congestion et de branchement''

\end{document}